\documentclass[a4paper,11pt,reqno]{amsart}
\usepackage{}
\usepackage{amssymb, amsmath, amsfonts, amscd}
\usepackage{mathrsfs, latexsym, array, longtable}
\usepackage[all,ps,cmtip]{xy}
\usepackage{xcolor, bm, enumitem}
\usepackage{url}
\usepackage{tikz}
\usepackage{verbatim}
\usepackage{graphicx}
\usepackage{cite}
\usepackage[all]{xy}
\usepackage{tikz-cd}
\DeclareMathOperator{\can}{can}


\title{On explicit birational geometry for minimal $n$-folds of canonical dimension $n-1$}

\author{Meng Chen}
\date{\today}
\address{\rm School of Mathematical Sciences, Fudan University, Shanghai 200433, China}
\email{mchen@fudan.edu.cn}

\author{Louis Esser}
\address{UCLA Mathematics Department,
Box 951555, Los Angeles, CA 90095-1555} \email{esserl@math.ucla.edu}

\author{Chengxi Wang}
\address{UCLA Mathematics Department,
Box 951555, Los Angeles, CA 90095-1555} \email{chwang@math.ucla.edu}

\thanks{M. Chen was supported by NSFC for Innovative Research Groups (\#12121001),
National Key Research and Development Program of China (\#2020YFA0713200) and NSFC (\#12071078,\#11731004). L. Esser was partially supported by NSF grant DMS-2054553.}

\newcommand{\bQ}{{\mathbb Q}}
\newcommand{\bP}{{\mathbb P}}
\newcommand{\roundup}[1]{\lceil{#1}\rceil}
\newcommand{\rounddown}[1]{\lfloor{#1}\rfloor}

\newcommand\Vol{\text{\rm Vol}}
\newcommand\lrw{\longrightarrow}

\newcommand\bZ{{\mathbb{Z}}}

\newcommand{\lsgeq}{\succcurlyeq}
\newcommand{\lsleq}{\preccurlyeq}

\newtheorem{thm}{Theorem}[section]

\newtheorem{cor}[thm]{Corollary}
\newtheorem{prop}[thm]{Proposition}
\newtheorem{claim}[thm]{Claim}

\theoremstyle{definition}
\newtheorem{defn}[thm]{Definition}

\newtheorem{question}[thm]{Question}
\newtheorem{exmp}[thm]{Example}

\newtheorem{rem}[thm]{Remark}

\begin{document}
\begin{abstract}  Let $n\geq 2$ be any integer.  We study the optimal lower bound $v_{n, n-i}$ of the canonical volume and the optimal upper bound $r_{n,n-i}$ of the canonical stability index for minimal projective $n$-folds of general type, which are canonically fibered by $i$-folds ($i=0,1$).  The results for $i = 0$, $v_{n,n}=2$ and $r_{n, n}=n+2$, are known to experts. In this article, we show that $v_{n,n-1}=\frac{6}{2n+(n \bmod 3)}$ and $r_{n,n-1}=\frac{1}{3}(5n+ 3 + (n \bmod 3))$. The machinery is applicable to all canonical dimensions $n-i$. 
\end{abstract}
\maketitle

\pagestyle{myheadings}
\markboth{\hfill M. Chen, L. Esser, and C. Wang\hfill}{\hfill On explicit birational geometry for minimal $n$-folds of canonical dimension $n-1$\hfill}
\numberwithin{equation}{section}

\section{Introduction}

In birational geometry, minimal varieties of general type form one of the basic building blocks of the minimal model program.  A crucial step toward classifying these varieties is to study the distribution of their birational invariants.  Since every smooth variety of general type has a minimal model, this will give the distribution of invariants in the smooth case as well. Throughout, we will work over any algebraically closed field of characteristic zero. 

We begin by recalling a few examples of birational invariants.  Let $n \geq 2$ be any integer and $X$ be a minimal projective $n$-fold of general type, so that $X$ has at worst $\bQ$-factorial terminal singularities and the canonical divisor $K_X$ is nef.  The $m$th {\it plurigenus} $P_m(X)$ of $X$ is the dimension of the space of sections $H^0(X,mK_X)$ for $m \geq 1$.  The {\it volume} of $X$ is a measure of the asymptotic growth of the plurigenera, defined as 

$$\Vol(X):= \limsup_{m \rightarrow \infty} \frac{n! P_m(X)}{m^n},$$
which is a positive rational number.  Since $X$ is minimal, so that $K_X$ is nef, the Riemann-Roch formula shows that $\Vol(X) = K_X^n$, the top intersection number of $K_X$.  We define the {\it canonical stability index} of $X$ by

$$r_s(X):=\text{min}\{p\in \bZ_{>0}| \varphi_{m,X}\ \text{is birational for all}\ m\geq p\},$$
where $\varphi_{m,X}$ denotes the $m$-canonical map of $X$.  For any dimension $n$, the $n$th {\it canonical stability index} is defined as:

$$r_n:={\text{max}}\{r_s(X)| X\ \text{is any minimal projective } n\text{-fold of general type}\}.$$

Similarly, we define

$$v_n:={\text{inf}}\{\Vol(X)| X\ \text{is any minimal projective } n\text{-fold of general type}\}.$$

A remarkable theorem of Hacon-M\textsuperscript{c}Kernan \cite{H-M06}, Takayama \cite{Ta} and Tsuji \cite{Ts} states that the canonical stability index is uniformly bounded in each dimension, so that $r_n<+\infty$.  As a consequence, we have $v_n>0$ (more precisely, $v_n \geq \frac{1}{(r_n)^n}$).
 Once boundedness is established, the natural next step in classification is to find explicit values for these bounds (see, for instance, Hacon-M\textsuperscript{c}Kernan
\cite[Problem 1.5, Question 1.6]{H-M06}).

We list some known results concerning $r_n$ and $v_n$:

\begin{itemize}
\item[$\circ$] By Bombieri \cite{Bom}, $r_2=5$ and $v_2=1$.

\item[$\circ$] By Iano-Fletcher \cite{F}, $r_3\geq 27$ and $v_3 \leq \frac{1}{420}$; by Chen-Chen \cite{CC1,CC2,CC3} and Chen \cite{57}, $r_3\leq 57$ and $v_3\geq \frac{1}{1680}$.

\item[$\circ$] By Esser-Totaro-Wang \cite{ETW}, for $n \geq 3$, $r_n > 2^{2^{(n-2)/2}}$ and $v_n < \frac{1}{2^{2^{n/2}}}$.
\end{itemize}

For a given minimal variety $X$ of general type, the {\it canonical dimension} of $X$ is defined as $d_1 := \dim\overline{\varphi_{1}(X)}$. For $1\leq i<n$, we define $r_{n,n-i}$ and $v_{n,n-i}$ to be the maximal canonical stability index and the infimum of volumes, respectively, among all minimal $n$-folds of general type with canonical dimension $n-i$.  Similarly, $r_{n,0}$ and $v_{n,0}$ (resp. $r_{n,-\infty}$ and $v_{n,-\infty}$) are the corresponding values for minimal $n$-folds with geometric genus $p_g = 1$ (resp. $p_g = 0$).  With these definitions,

\begin{eqnarray*}
r_n&=&\text{max}\{r_{n,j}| -\infty\leq j\leq n\} \text{ and} \\
v_n&=&\text{min}\{v_{n,j}| -\infty\leq j\leq n\}.
\end{eqnarray*}

When $X$ has canonical dimension $n-i$ for small $i$, we can analyze it using its canonical fibration by $i$-folds.  In particular, the values $r_{n,n-1}$ have long been studied with applications to the classification of algebraic varieties, beginning with surfaces. In this paper, we study values of $r_{n,j}$ and $v_{n,j}$ for $j\geq n-1$ and prove the following theorem:


\begin{thm}\label{main} Let $n\geq 2$ be any integer.  The following statements hold:
\begin{itemize}
\item[(1)] $v_{n,n}=2$ and $r_{n,n}=n+2$;
\item[(2)]
$v_{n,n-1}=\frac{6}{2n+(n \bmod 3)}$ and
$r_{n,n-1}=\frac{1}{3}(5n+3+(n \bmod 3));$
\end{itemize}
where
$``n \bmod 3"$ is the minimal non-negative residue of $n$ modulo $3$.
\end{thm}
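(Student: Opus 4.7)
When $d_1(X)=n$, the canonical map $\varphi_1$ is generically finite onto its image. A case analysis yields $v_{n,n}\ge 2$: if $\deg\varphi_1\ge 2$, then $K_X^n \ge 2\deg\varphi_1(X)\ge 2$, while if $\varphi_1$ is birational onto its image, the image of $X$ lives in $\bP^{p_g-1}$ and a Noether-type inequality forces $p_g\ge n+2$, giving $K_X^n\ge 2p_g-2n\ge 4$. Sharpness is realized by a smooth double cover $\pi\colon X\to\bP^n$ branched along a very general divisor of degree $2(n+2)$: one computes $K_X=\pi^*\OO(1)$, $K_X^n=2$, and $\pi_*\OO_X=\OO\oplus\OO(-(n+2))$, which shows that sections of $mK_X$ separating the two sheets of $\pi$ appear precisely for $m\ge n+2$, hence $r_{n,n}\ge n+2$. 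The upper bound $r_{n,n}\le n+2$ follows from generic finiteness combined with standard vanishing arguments.

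\textbf{Part (2), setup and volume bound.} Given a minimal $n$-fold $X$ of general type with $d_1(X)=n-1$, I would take a resolution $\pi\colon X'\to X$ so that $\varphi_1\circ\pi=g\circ f$ with $f\colon X'\to Y'$ a fibration onto a smooth $(n-1)$-fold $Y'$ with connected fibers and $g$ generically finite. A general fiber $F$ of $f$ is a smooth curve; triviality of its conormal in $X'$ gives $K_{X'}|_F=K_F$ by adjunction, and bigness of $K_X$ combined with the fibration structure forces $g(F)\ge 2$. Writing $\pi^*K_X\sim_{\bQ}M+Z$ with $M=f^*H$ the movable part of $|\pi^*K_X|$ and $Z$ effective, nefness of $\pi^*K_X$ and $M$ gives
\[
K_X^n \;\geq\; M\cdot(\pi^*K_X)^{n-1}\;\geq\;\cdots\;\geq\; M^{n-1}\cdot\pi^*K_X \;=\;(\deg_{Y'}H^{n-1})\cdot(K_X\cdot C),
\]
where $C=\pi_*F$. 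This preliminary bound is too weak; the refinement uses Koll\'ar's canonical bundle formula $K_{X'}\sim_{\bQ}f^*(K_{Y'}+B+L_{\mathrm{mod}})+E$ combined with the extremal structure of a genus-$2$ fiber (a double cover of $\bP^1$). The sharp ratio $\frac{6}{2n+(n\bmod 3)}$ arises with numerator $6$ from the tricanonical data of a genus-$2$ curve and denominator controlled by one of three extremal geometries for $Y'$ indexed by $n\bmod 3$.

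\textbf{Part (2), stability index.} I would reduce birationality of $|mK_X|$ to two conditions: separation of general fibers of $f$, which by induction on $n$ combined with Reider-type theorems on $Y'$ requires $m$ to grow linearly in $n$; and birationality of the restriction of $|mK_{X'}|$ to each fiber $F$, which on a genus-$2$ curve requires $m\ge 3$. Balancing these thresholds yields $r_{n,n-1}\le\tfrac{1}{3}(5n+3+(n\bmod 3))$. Sharpness is realized by explicit weighted complete intersection examples, constructed one per residue class of $n$ modulo $3$, which simultaneously attain the minimal volume and fail to have $\varphi_m$ birational for $m$ strictly below the stated threshold.

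\textbf{Main obstacle.} The hardest step is the sharp volume inequality: the chain-of-intersection argument above only produces a preliminary bound, and the true sharp value $\frac{6}{2n+(n\bmod 3)}$ emerges only after combining Koll\'ar's canonical bundle formula, the genus-$2$ fiber structure, and the singularities of $Y'$ in a way that isolates the three residue cases. A parallel challenge is constructing extremal examples in each residue class whose numerics force equality throughout the chain and whose pluricanonical systems realize the sharp stability threshold exactly.
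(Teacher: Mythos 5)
Your skeleton matches the paper's: a fibration structure with general fiber a genus~$2$ curve, a chain of restrictions $Z_1\subset Z_2\subset\cdots\subset Z_n=X'$ cut out by the movable part of $|K_{X'}|$, a trichotomy by $n\bmod 3$, and extremal weighted hypersurfaces (one per residue class) for sharpness. Part (1) is essentially the standard Kobayashi-type argument and is fine in outline. The genuine gap is in Part (2): you correctly observe that the naive chain inequality $K_X^n\geq M^{n-1}\cdot\pi^*K_X$ only gives $\xi\geq 2/n$-type bounds, but your proposed fix --- Koll\'ar's canonical bundle formula plus ``three extremal geometries for $Y'$'' --- is an assertion, not a mechanism, and it is not what produces the constant. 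The paper's engine is an arithmetic self-improvement: Kawamata--Viehweg vanishing along the chain yields, for every $m$ with $\alpha_m:=(m-1-\sum_{i=2}^n\frac{1}{\beta_i})\xi>1$, the inequality $m\xi\geq (2g(Z_1)-2)+\roundup{\alpha_m}$. Given any lower bound on $\xi$, one chooses $m$ just above the threshold so that the ceiling $\roundup{\alpha_m}$ contributes a full extra unit, which strictly improves the lower bound on $\xi$; iterating and passing to the limit of this recursion gives exactly $\xi\geq\frac{6}{2n+(n\bmod 3)}$, and the three residue classes arise purely from the arithmetic of $\roundup{\cdot}$ and the optimal integer choices of $m$ (e.g.\ $m_l=\rounddown{\frac{9}{2}k+4},\dots$ when $n=3k+2$). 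Nothing in your write-up supplies this bootstrap, and the numerator $6$ has nothing to do with tricanonical data of the fiber --- it is $3\cdot(2g(Z_1)-2)$ emerging from the fixed point of the iteration. (The paper does have a secondary geometric argument recovering the same bound, but the residue-class trichotomy there lives on the minimal model of the \emph{surface} $Z_2$ --- configurations of exceptional rational curves $E$, $V_1$, $V_2$ contracted to a cyclic quotient singularity --- not on the base $Y'$ of the fibration.)

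The stability index bound suffers from the same gap, because it is not obtained by ``balancing a linear separation threshold against $m\geq 3$ on a genus-$2$ curve.'' The precise criterion is $\alpha_m>2$ together with the separation of generic irreducible elements at each stage of the chain; since $\sum 1/\beta_i\leq n-1$, this reads $m>n+2/\xi$, and only after the sharp bound $\xi\geq\frac{6}{2n+(n\bmod 3)}$ is in hand does one get $m\geq\frac{1}{3}(5n+3+(n\bmod 3))$. So the exact constant in $r_{n,n-1}$ is downstream of the volume bootstrap you are missing, and your proposal gives no independent derivation of it. The examples you invoke do exist (they are hypersurfaces, e.g.\ $V_{10k+10}\subset\bP(1^{(3k+2)},2(k+1),5(k+1))$ for $n=3k+2$, rather than general weighted complete intersections), so sharpness is not the issue; the missing content is the mechanism for the lower bounds.
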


In each dimension, we will present optimal examples achieving the bounds in (1) and (2), which will be weighted projective hypersurfaces of general type.  For an introduction to weighted projective hypersurfaces, see \cite[Section 2]{ETW}.

\begin{rem}  Some special values of $v_{n,j}$ and $r_{n,j}$ are already known:
\begin{itemize}
\item[$\circ$] The surface case is due to Bombieri \cite{Bom};
\item[$\circ$] The threefold case is proved by one of the authors (Chen) \cite{MA, IJM} ;
\item[$\circ$] The value of $v_{n,n}$ is originally due to Kobayashi \cite{Kobayashi}; 
\item[$\circ$]  The values of $v_{4,3}$, $v_{4,2}$, $v_{5,4}$ and $v_{5,3}$ are already obtained in Chen-Jiang-Li \cite{CJL}.
\end{itemize}
\end{rem}

\bigskip
Throughout, we make use of the following notation:
\begin{itemize}
\item[\textordmasculine] For two $\bQ$-divisors $D_1$ and $D_2$, $D_1 \sim_{\bQ} D_2$ (resp., $D_1 \equiv D_2$) means that $D_1$ is $\bQ$-linearly equivalent (resp., numerically equivalent) to $D_2$.  Similarly, $D_1\geq_{\text{num}}D_2$ (resp.,  $D_1\geq_{\bQ}D_2$) means that $D_1-D_2$ is numerically (resp., $\bQ$-linearly) equivalent to an effective $\bQ$-divisor.
\item[\textordmasculine] For two linear systems $|A|$ and $|B|$ on a variety, we write $|A|\lsgeq |B|$ (or symmetrically, $|B|\lsleq |A|$) if there exists an effective divisor $F$ such that
     $$|A|\supseteq |B|+F.$$
\end{itemize}

\section{A key technical theorem}

\begin{thm}\label{kt} Let $X$ be a minimal projective $n$-fold of general type.  Let
$X'$ be a nonsingular projective variety and  $\pi: X'\lrw X$ be a birational morphism. Assume that there exists a chain of smooth subvarieties of general type
$$Z_1\subset Z_2\subset \cdots \subset Z_{n-1}\subset Z_n=X'$$
with $\dim Z_j=j$ for $j=1,2,\ldots, n-1$.  Suppose that the following conditions hold:
\begin{itemize}
\item[(i)] $\pi^*(K_X)|_{Z_i}$ is big for each $i=2,\ldots, n-1$;
\item[(ii)] 
$\pi^*(K_X)|_{Z_i}\equiv \beta_i Z_{i-1}+\triangle_{i-1}$
where $\beta_i$ is a positive rational number, $\triangle_{i-1}$ is an effective $\bQ$-divisor on $Z_{i}$ and 
$$Z_{i-1}\not\in \text{Supp}(\triangle_{i-1}+\sum_{j\geq i}{\triangle_j}|_{Z_i})$$  for each $i=2,\ldots, n$;
\item[(iii)]  the number  $\xi:=\big(\pi^*(K_X)\cdot Z_1\big)>0$. 
\end{itemize}
Then each of the following statements holds:
\begin{itemize}
\item[(1)]  For any integer $m$ with $\alpha_m:=(m-1-\sum_{i=2}^n\frac{1}{\beta_i})\xi>1$, the inequality
\begin{equation} m\xi\geq (2g(Z_1)-2)+\roundup{\alpha_m}\label{Ine1}
\end{equation}holds. In particular, under this situation, one has
\begin{equation}\xi\geq \frac{2g(Z_1)-2}{1+\sum_{i=2}^n\frac{1}{\beta_i}}.\label{Ine2}\end{equation}
\item[(2)] The canonical volume of $X$ has the lower bound:
\begin{equation}K_X^n\geq \beta_2\beta_3\cdots\beta_{n} \xi.\label{Ine3}\end{equation}
\end{itemize}
\end{thm}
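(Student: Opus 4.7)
I plan to treat the two parts separately. Part (2) admits a clean inductive intersection-theoretic proof, while part (1) demands iterated adjunction combined with a Kawamata--Viehweg/Riemann--Roch argument on the curve $Z_1$.

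For part (2), write $K_X^n=(\pi^*K_X)^{n-1}\cdot\pi^*K_X$ and substitute $\pi^*K_X\equiv\beta_n Z_{n-1}+\triangle_{n-1}$ from condition (ii). Nefness of $\pi^*K_X$ together with effectivity of $\triangle_{n-1}$ gives $(\pi^*K_X)^{n-1}\cdot\triangle_{n-1}\geq 0$, so
$$K_X^n\;\geq\;\beta_n\,(\pi^*K_X)^{n-1}\cdot Z_{n-1}\;=\;\beta_n\,(\pi^*K_X|_{Z_{n-1}})^{n-1}.$$
The restriction $\pi^*K_X|_{Z_{n-1}}$ is nef on $Z_{n-1}$, and condition (ii) supplies the analogous decomposition there. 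Iterating the argument down the chain $Z_{n-1}\supset\cdots\supset Z_2\supset Z_1$ contributes a factor $\beta_i$ per step; the final step reduces to $(\pi^*K_X|_{Z_2})\cdot Z_1=\xi$, yielding \eqref{Ine3}.

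For part (1), iterated adjunction expresses the canonical class of $Z_1$ as
$$K_{Z_1}\;\equiv\;K_{X'}|_{Z_1}+\sum_{i=2}^{n}Z_{i-1}|_{Z_1},$$
where each $Z_{i-1}$ is first viewed as a divisor on $Z_i$ and then restricted successively down to $Z_1$. Condition (ii) rewrites $Z_{i-1}\equiv\frac{1}{\beta_i}\pi^*K_X|_{Z_i}-\frac{1}{\beta_i}\triangle_{i-1}$, and the support hypothesis guarantees that each iterated restriction $\triangle_{i-1}|_{Z_1}$ is an honest effective divisor (not picking up $Z_1$ as a component). After replacing $X'$ by a sufficiently blown-up model so that $Z_1$ avoids the $\pi$-exceptional locus, $K_{X'}\cdot Z_1=\xi$, and a direct degree comparison on $Z_1$ gives
$$m\xi\;\geq\;(2g(Z_1)-2)+\alpha_m,$$
since the $\triangle_{i-1}|_{Z_1}$ contributions have nonnegative degree. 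Rearranging yields $\xi\bigl(1+\sum_{i=2}^n\tfrac{1}{\beta_i}\bigr)\geq 2g(Z_1)-2$, which is \eqref{Ine2}.

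The main technical obstacle is upgrading this real-valued inequality to the integer-ceiling form \eqref{Ine1}. The hypothesis $\alpha_m>1$ is tailored for a Kawamata--Viehweg vanishing argument: once enough of the $\alpha_m$ margin is present, $m\pi^*K_X-K_{X'}$ is $\bQ$-linearly equivalent to a big and nef $\bQ$-divisor (plus effective corrections), so sections of $|mK_{X'}|$ restrict surjectively from $X'$ through each $Z_i$ down to $Z_1$. Unwinding these surjections produces an honest integer-coefficient effective divisor on $Z_1$ in the linear class of $m\pi^*K_X|_{Z_1}-K_{Z_1}$, whose degree, being an integer at least $\alpha_m$, is at least $\lceil\alpha_m\rceil$. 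Carefully tracking the fractional contributions of the $\beta_i^{-1}$'s and the $\triangle_{i-1}$'s through both the adjunction and the vanishing steps is the delicate heart of the argument.
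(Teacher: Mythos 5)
Your treatment of part (2) is correct and coincides with what the paper does: substitute $\pi^*(K_X)\equiv \beta_n Z_{n-1}+\triangle_{n-1}$ into $(\pi^*(K_X))^n$, discard the $\triangle$-term by nefness, and iterate down the chain to reach $\xi$. No issues there.

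Part (1), however, has a genuine gap at its foundation. Your ``direct degree comparison'' rests on the claim that $K_{X'}\cdot Z_1=\xi$ after passing to a model where $Z_1$ avoids the $\pi$-exceptional locus. This cannot be arranged: further blowing up only creates more exceptional divisors, and the strict transform of $Z_1$ still meets them; if $\pi$ is not already an isomorphism near $Z_1$ (and in the intended applications $Z_1$ is cut out by the movable part of $|K_{X'}|$, so it typically does meet the exceptional divisors over the base locus of $|K_X|$), then writing $K_{X'}=\pi^*(K_X)+E_\pi$ with $E_\pi$ effective and exceptional gives $K_{X'}\cdot Z_1=\xi+E_\pi\cdot Z_1\geq \xi$ --- an inequality in the \emph{wrong} direction. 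Concretely, adjunction yields the identity
$$2g(Z_1)-2=\xi\Big(1+\sum_{i=2}^n\frac{1}{\beta_i}\Big)+E_\pi\cdot Z_1-\sum_{i=2}^n\frac{1}{\beta_i}\deg(\triangle_{i-1}|_{Z_1}),$$
and nothing in the hypotheses controls the sign of $E_\pi\cdot Z_1-\sum_i\frac{1}{\beta_i}\deg(\triangle_{i-1}|_{Z_1})$. So \eqref{Ine2} is \emph{not} a formal consequence of adjunction plus condition (ii); it genuinely requires cohomological input.

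The paper's mechanism, which your final paragraph gestures at but does not carry out, is the only working route: one applies Kawamata--Viehweg vanishing at each step to the nef and big class $(m-1)\pi^*(K_X)|_{Z_i}-Z_{i-1}-\frac{1}{\beta_i}\triangle_{i-1}\equiv (m-1-\frac{1}{\beta_i})\pi^*(K_X)|_{Z_i}$, obtaining $|mK_{X'}||_{Z_1}\lsgeq |K_{Z_1}+\roundup{Q_m}|$ with $Q_m\equiv (m-1-\sum_i\frac{1}{\beta_i})\pi^*(K_X)|_{Z_1}$ (note it is this smaller adjoint system, not $|mK_{X'}|$ itself, that restricts surjectively). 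Two further ingredients are then indispensable and are missing from your sketch: (a) since $\alpha_m>1$ forces $\deg(K_{Z_1}+\roundup{Q_m})\geq 2g(Z_1)$, this divisor is base point free on the curve $Z_1$, so the movable part of $|mK_{X'}||_{Z_1}$ has degree at least $(2g(Z_1)-2)+\roundup{\alpha_m}$; and (b) the upper bound $\mathrm{Mov}|mK_{X'}|\cdot Z_1\leq m\pi^*(K_X)\cdot Z_1=m\xi$, which uses minimality of $X$ (the movable part of $|mK_{X'}|$ is dominated by $m\pi^*(K_X)$, not merely by $mK_{X'}$). It is exactly step (b) that replaces your problematic $K_{X'}\cdot Z_1=\xi$, since it puts the discrepancy $E_\pi$ on the harmless side of the inequality. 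Finally, \eqref{Ine2} is then deduced from \eqref{Ine1} by letting $m\to\infty$, not the other way around. Also note that the class $m\pi^*(K_X)|_{Z_1}-K_{Z_1}$ you invoke has degree $m\xi-(2g(Z_1)-2)$, which is only rational, so the phrase ``an integer at least $\alpha_m$'' must be attached to $\deg\roundup{Q_m}$ rather than to that class.
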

\begin{proof}  First of all, if we replace $X'$ by any nonsingular birational model (which dominates $X'$) for which no $Z_i$ is contained in the image of the exceptional locus, then we may replace each $Z_i$ by its strict transform.  After performing this replacement, the assumption still holds for the same values of $\beta_i$ and the number $\xi$ remains unchanged by the projection formula.  Hence, fixing an effective divisor $K_1\sim K_X$ at the beginning, we may and do assume that, on $X'$, the union of the divisor $\pi^*(K_1)|_{Z_i}$, $\triangle_{i-1}$  (for all $i=2,\dots, n$) and exceptional divisors has simple normal crossing supports.

Suppose $m>1+\sum_{i=2}^n\frac{1}{\beta_i}$. Since the $\bQ$-divisor
$$(m-1)\pi^*(K_X)|_{Z_n}-\frac{1}{\beta_n}\triangle_{n-1}-Z_{n-1}\equiv \left(m-1-\frac{1}{\beta_n}\right)\pi^*(K_X)|_{Z_n}$$
is nef and big, and the fractional part has snc supports, the Kawamata-Viehweg vanishing theorem \cite{KV,VV} implies:
{\footnotesize \begin{eqnarray}
|mK_{X'}||_{Z_{n-1}}&\lsgeq &|K_{X'}+\roundup{(m-1)\pi^*(K_X)-\frac{1}{\beta_n}\triangle_{n-1}}||_{Z_{n-1}}\notag\\
&\lsgeq &|K_{Z_{n-1}}+\roundup{\big((m-1)\pi^*(K_X)-Z_{n-1}-\frac{1}{\beta_n}\triangle_{n-1}\big)|_{Z_{n-1}}}|. \label{e1}
\end{eqnarray}}
By induction, one clearly gets, for $i=2,\ldots, n-1$, that
{\footnotesize \begin{eqnarray}
|mK_{X'}||_{Z_{i-1}}&\lsgeq &|K_{Z_{i}}+\roundup{(m-1)\pi^*(K_X)|_{Z_i}-
\sum_{l=i+1}^n\big(Z_{l-1}+\frac{1}{\beta_l}\triangle_{l-1}\big)|_{Z_{i}}-\frac{1}{\beta_i}\triangle_{i-1}}|_{Z_{i-1}}\notag\\
&\lsgeq &|K_{Z_{i-1}}+\roundup{(m-1)\pi^*(K_X)|_{Z_{i-1}}-\sum_{l=i}^n\big(Z_{l-1}+\frac{1}{\beta_l}\triangle_{l-1}\big)|_{Z_{i-1}}}|. \label{e2}
\end{eqnarray}}
By \eqref{e1} and \eqref{e2} while repeatedly using \cite[Lemma 2.7]{MPCPS}, we have
\begin{equation}
\label{degree}
{M_m}|_{Z_1}\geq  \text{Mov}|K_{Z_1}+\roundup{Q_m}|,
\end{equation}
where $Q_m:=(m-1)\pi^*(K_X)|_{Z_1}-\sum_{i=2}^n\big(Z_{i-1}+\frac{1}{\beta_i}\triangle_{i-1}\big)|_{Z_1}$ and $M_m:=\text{Mov}|mK_{X'}|$.
Note that, by Condition (ii), the divisor $Z_{i-1}|_{Z_{i-1}}$ never appears in fractional part of all above $\bQ$-divisors. 

Whenever $\alpha_m = \deg(Q_m) >1$,  $Q_m\equiv (m-1-\sum_{i=2}^n\frac{1}{\beta_i})\pi^*(K_X)|_{Z_1}$ is nef and big, and $\text{Mov}|K_{Z_1}+\roundup{Q_m}|=K_{Z_1}+\roundup{Q_m}$, because divisors of degree at least $2g(Z_1)$ are base point free. Since $m \pi^*(K_X) \geq M_m$ and $\deg(\roundup{Q_m})\geq \deg(Q_m)$, taking degrees on both sides of \eqref{degree} proves \eqref{Ine1} in (1). 

Taking a sufficiently large integer $m'$ so that $\alpha_{m'}>1$, \eqref{Ine2} is a consequence of \eqref{Ine1}. Statement (2) then directly follows from the assumptions and the fact that $\pi^*(K_X)$ is nef.
\end{proof}

\begin{defn}\cite[Definition 2.3]{MZ2007}
A {\it generic irreducible element} $S$ of a movable linear system $|N|$ on a variety $Z$ is a generic irreducible component in a general member of $|N|$. Thus, it is a general member of $|N|$ whenever $\dim\varphi_{|N|}(V)\geq 2$. If $|N|$ is composed with a pencil, i.e., $\dim\varphi_{|N|}(V)= 1$, one has $N \equiv tS$ for some integer $t \geq 1$.
\end{defn}

\begin{defn}\cite[Definition 2.6]{CC3}
Let $|N|$ be a movable linear system on a variety $Z$. Pick two different generic irreducible elements $S'$, $S''$ in $|N|$. We say that a linear system $|M|$ (resp. a rational map $\varphi$ corresponding to a linear system) {\it distinguishes} $S'$ and $S''$ if $\varphi_{|M|}(S')\neq \varphi_{|M|}(S'')$ (resp. if $\varphi(S')\neq \varphi(S'')$). 
\end{defn}

In proving birationality of the rational map $\varphi_{\Lambda}$, where $\Lambda\subset |L|$ and $L$ a divisor on a projective variety $Z$, we tacitly use the following rule: 

\begin{quote}
Let $|M|$ be a base point free linear system on $Z$ and denote by $S$ a generic irreducible element of $|M|$. If $\varphi_{\Lambda}$ distinguishes different generic irreducible elements of $|M|$ and $\varphi_{\Lambda}|_S$ is birational, then $\varphi_{\Lambda}$ is birational. Conversely, if $\varphi_{\Lambda}$ is birational, it is clear that, for any base point free linear system $|M|$, $\varphi_{\Lambda}$ distinguishes different generic irreducible elements of $|M|$ and that $\varphi_{\Lambda}|_S$ is birational. 
\end{quote}

We'll also make use of the following \cite[Section 2.7]{CC2}:

\begin{prop}[Birationality principle]
Let $D$ and $M$ be two divisors on a smooth projective variety $Z$. Assume that $|M|$ is base point free. Take the Stein factorization of $\varphi_{|M|}: Z \xrightarrow{f} W \rightarrow \bP^{h^0(Z,M)-1}$, where $f$ is a fibration onto a normal variety $W$. The rational map $\varphi_{|D|}$ is birational onto its image if one of the following conditions is satisfied:
\begin{itemize}
\item $\dim$ $\varphi_{|M|}(Z)\geq 2$, $|D-M|\neq \emptyset$ and $\varphi_{|D|}|_S$ is birational for a general member $S$ of $|M|$
\item  $\dim$ $\varphi_{|M|}(Z)=1$, $\varphi_{|D|}$ distinguishes general fibers of $f$ and ${\varphi_{|D|}}|_S$ is birational for a general fiber $S$ of $f$.
\end{itemize} 
\end{prop}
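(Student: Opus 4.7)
The plan is to verify that $\varphi_{|D|}$ is birational onto its image by establishing two properties: (a) $\varphi_{|D|}$ restricted to a general ``slice'' coming from $|M|$ is birational, and (b) $\varphi_{|D|}$ separates the images of two generic distinct slices. Once both hold, any generic pair of points of $Z$ is either contained in a single slice (handled by (a)) or lies in two distinct slices (handled by (b)), and this is precisely the content of the ``rule'' stated just before the proposition. Property (a) is exactly assumed in each case; only (b) requires justification in case (1), while in case (2) it is hypothesized outright.

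For case (1), use $|D-M|\neq \emptyset$ to choose an effective divisor $E$ with $D\sim M+E$, and let $s_E$ be its defining section. Multiplying by $s_E$ embeds $H^0(Z,M)$ into $H^0(Z,D)$ as a linear subspace $V$, so the rational map $\varphi_V$ is a linear projection of $\varphi_{|D|}$; moreover $\varphi_V$ agrees with $\varphi_{|M|}$ on the open set $Z\setminus \text{Supp}(E)$. Consequently, if $\varphi_{|M|}$ separates two general members $S_1,S_2$ of $|M|$, then so does $\varphi_V$, and hence so does $\varphi_{|D|}$. But since $\dim \varphi_{|M|}(Z)\geq 2$, two such general members pull back from distinct general hyperplane sections of $\varphi_{|M|}(Z)$, which by Bertini are irreducible and obviously distinct as subvarieties. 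This yields the needed separation property, and combined with birationality of $\varphi_{|D|}|_S$ for general $S\in|M|$, the rule concludes birationality of $\varphi_{|D|}$.

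For case (2), the same skeleton applies but with the Stein factorization $f:Z\to W$ (where $\dim W=1$) playing the role of $|M|$. Since $\dim\varphi_{|M|}(Z)=1$, a general member of $|M|$ is reducible (a union of fibers of $f$), so the natural irreducible slices to separate are the general fibers of $f$ themselves. Both halves of the rule are then assumed directly in the hypotheses --- $\varphi_{|D|}$ distinguishes general fibers of $f$, and $\varphi_{|D|}|_S$ is birational on a general fiber $S$ --- so a direct application of the rule finishes the argument.

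The only substantive step is the reduction in case (1), where the effective divisor $E\in|D-M|$ is used to transport the separation property from $\varphi_{|M|}$ to $\varphi_{|D|}$; this is the ``main obstacle'' only in the sense that it is the one place where a choice must be made and a dense-open-set argument invoked. Everything else is a formal application of the stated birationality criterion.
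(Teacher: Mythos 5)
The paper does not actually prove this proposition---it is quoted from \cite[Section 2.7]{CC2}---so your write-up must stand on its own, and as it stands both cases are ultimately discharged onto the informal ``rule'' stated just before the proposition, which is precisely the content that needs proof. The genuine gap is the step from ``$\varphi_{|D|}$ sends two distinct general slices to two distinct subvarieties'' to ``$\varphi_{|D|}$ sends two general points lying in distinct slices to two distinct points.'' These are not equivalent: two distinct irreducible images can still pass through a common general point of the total image, and when the slices sweep out a one-parameter family this actually occurs. For instance, let $Z$ be a del Pezzo surface of degree $2$, $D=-K_Z$, and $f:Z\to\bP^1$ one of its conic bundle structures, with $|M|$ the corresponding pencil. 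Then $\varphi_{|D|}$ restricted to a general fibre $F$ is an embedding of $F$ as a conic in $\bP^2$, distinct general fibres have distinct image conics (the Geiser involution does not preserve the fibration, so no two fibres are interchanged by it), and yet $\varphi_{|D|}$ is $2:1$. So the hypotheses of the ``rule,'' with ``distinguishes'' read as in the paper's definition ($\varphi(S')\neq\varphi(S'')$), do not by themselves imply birationality; your treatment of case (2), which consists solely of matching the hypotheses to the rule, therefore proves nothing. Any correct proof must use the operative, stronger form of separation that is actually verified in applications (e.g.\ in Section 4.2 of this paper): surjectivity of $H^0(Z,D)\to H^0(F_1,D|_{F_1})\oplus H^0(F_2,D|_{F_2})$, which yields a member of $|D|$ containing $F_1$ and avoiding a general point of $F_2$, and hence genuinely separates points of distinct fibres.

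Case (1) is in better shape, because the missing point-separation is already implicit in your own intermediate step, but you then discard it. Since $\varphi_V$ agrees with $\varphi_{|M|}$ on $Z\setminus\mathrm{Supp}(E)$ and is a linear projection of $\varphi_{|D|}$, what you have really shown is that $\varphi_{|D|}$ separates any two general points that $\varphi_{|M|}$ separates---a statement about points, not about members of $|M|$. Finish from there: for general $P,Q$, either $\varphi_{|M|}(P)\neq\varphi_{|M|}(Q)$, in which case the above separates them (equivalently, $S+E\in|D|$ contains $P$ but not $Q$ for a suitable $S\in|M|$ through $P$); or $\varphi_{|M|}(P)=\varphi_{|M|}(Q)$, in which case every member of $|M|$ through $P$ contains $Q$, a general such member $S$ is irreducible (Bertini, using $\dim\varphi_{|M|}(Z)\geq 2$) with $P,Q$ in general position on it, and birationality of $\varphi_{|D|}|_S$ separates them. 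Phrased this way the argument is complete and never needs the unproved rule; retreating instead to ``$\varphi_{|D|}$ distinguishes general members of $|M|$'' reintroduces exactly the gap described above for case (2).
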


In practice, in order to prove the birationality of $\varphi_{|D|}$, it is sufficient to find two sublinear systems $\Lambda_i\subset |D|$ ($i=1,2$) such that $\varphi_{\Lambda_1}$ distinguishes different generic irreducible elements of $|M|$ while ${\varphi_{\Lambda_2}}|_S$ is birational for a generic irreducible element $S$ of $|M|$.

\begin{cor}\label{BIRAT} Keep the same assumptions and notation as in Theorem \ref{kt}. Let $m$ be a positive integer. Assume that the following conditions are satisfied:

For each $i=2,\ldots,n$,  
\begin{itemize}
\item[(1)]  there exists a base point free linear system $|N_i|$ on $Z_i$ such that $Z_{i-1}$ is the generic irreducible element of  $|N_i|$;
\item[(2)]  the linear system $|mK_{X'}||_{Z_i}$ distinguishes different generic irreducible elements of $|N_i|$;
\item[(3)] $\alpha_m>2$.
\end{itemize}
Then $\varphi_{m,X}$ is birational onto its image.
\end{cor}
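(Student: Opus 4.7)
The plan is to prove that $\varphi_{m,X'}$ (equivalently $\varphi_{m,X}$, since $\pi$ is birational) maps birationally onto its image via descending induction along the chain $Z_n = X' \supset Z_{n-1} \supset \cdots \supset Z_1$, applying at each step the birationality ``Rule'' stated immediately before this corollary.

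For the inductive step at level $i$ (with $2 \leq i \leq n$), the hypotheses of the Rule are exactly those built into our assumptions: condition (1) furnishes a base-point-free linear system $|N_i|$ on $Z_i$ whose generic irreducible element is $Z_{i-1}$, and condition (2) says that $|mK_{X'}||_{Z_i}$ distinguishes different generic irreducible elements of $|N_i|$. So birationality of $\varphi_{|mK_{X'}||_{Z_i}}$ reduces to birationality of its restriction to $Z_{i-1}$, which is defined by a sublinear system of $|mK_{X'}||_{Z_{i-1}}$. One then descends one step and re-applies the Rule on $Z_{i-1}$; iterating, the problem collapses onto the base case of showing that the restriction of $|mK_{X'}|$ to the curve $Z_1$ defines a birational morphism.

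For the base case, I would reuse the Kawamata--Viehweg vanishing chain from the proof of Theorem \ref{kt}, specifically inequality \eqref{degree}, which gives
$$M_m|_{Z_1} \lsgeq \mathrm{Mov}|K_{Z_1} + \roundup{Q_m}|,$$
where $M_m = \mathrm{Mov}|mK_{X'}|$ and $\deg Q_m = \alpha_m$. Condition (3) supplies $\alpha_m > 2$, so $\roundup{\alpha_m} \geq 3$ and
$$\deg(K_{Z_1} + \roundup{Q_m}) \;\geq\; 2g(Z_1) - 2 + 3 \;=\; 2g(Z_1) + 1.$$
On a smooth curve any divisor of degree at least $2g+1$ is very ample; hence $|K_{Z_1} + \roundup{Q_m}|$ embeds $Z_1$, and so the larger linear system $|mK_{X'}||_{Z_1}$ also defines a birational morphism.

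The descending induction is essentially bookkeeping, so the only real content is the curve-level degree estimate, and the only obstacle is simply to track that at each level the sublinear system inherited from the previous step is adequate to invoke the Rule. The quantitative sharpening in hypothesis (3) (namely $\alpha_m > 2$ rather than $\alpha_m > 1$ as in Theorem \ref{kt}) is precisely what upgrades the curve-level conclusion from base-point freeness (which only yields the volume bound) to very ampleness, which is what is needed to propagate birationality up the chain and conclude that $\varphi_{m,X}$ is birational.
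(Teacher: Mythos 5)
Your proposal is correct and follows essentially the same route as the paper: a descending induction via the birationality principle using hypotheses (1)--(2) together with relations \eqref{e1}--\eqref{e2}, reducing to the curve $Z_1$, where $\alpha_m>2$ forces $\deg(K_{Z_1}+\roundup{Q_m})\geq 2g(Z_1)+1$ and hence very ampleness, so that $|mK_{X'}||_{Z_1}\lsgeq |K_{Z_1}+\roundup{Q_m}|$ gives birationality. The paper states the curve-level step more tersely, but your degree computation is exactly the content behind it.
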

\begin{proof} We refer to the proof of Theorem \ref{kt}.  Applying the birationality principle repeatedly, by Assumptions (1) - (2) , Relation \eqref{e1} and Relation \eqref{e2}, $\varphi_{m,X}$ is birational if and only if so is $\varphi_{m,X'}|_{Z_{n-1}}$, by inductions, $\cdots$, if and only if so is $\varphi_{m,X'}|_{Z_2}$, if and only if so is $\varphi_{m,X'}|_{Z_1}$. 
Now we know that $\varphi_{|K_{Z_1}+\roundup{Q_m}|}$ is birational as $\alpha_m>2$. 
{}From the proof of Theorem \ref{kt}, we have seen that $|K_{Z_1}+\roundup{Q_m}|\lsleq |mK_{X'}||_{Z_1}$. Hence $\varphi_{m,X'}|_{Z_1}$ is birational. We are done. 
\end{proof}

\section{The canonical map of varieties of general type}

\subsection{Set up for $\varphi_{1,X}$}\label{set} \

Let $X$ be a minimal projective $n$-fold ($n\geq 2$) of general type with $p_g(X)\geq 2$ and canonical dimension $d_1$. Clearly we have $1\leq d_1\leq n$.
Let $\pi:X'\lrw X$ be a succession of blow-ups along nonsingular centers such that $|M_1|$, the linear system of $\text{Mov}|K_{X'}|$, is base point free and that the union of the fixed part of $|K_{X'}|$ and exceptional divisors of $\pi$ has simple normal crossing supports. Take $g_1:= \varphi_{1,X}\circ \pi$, which is a projective morphism.  Denote by $f:X'\lrw \Gamma$ be the induced fibration after taking the Stein factorization of $g_1$. So $\dim \Gamma=d_1$.

Set $|N_n|:= |M_1|$ and $Z_{n}:= X'$.  For any integer $k$ with $n-d_1+2\leq k\leq n$, inductively, take $Z_{k-1}$ to be the generic irreducible element of $|N_k|$ and set
$|N_{k-1}|:= |{M_1}|_{Z_{k-1}}|$. By Bertini's theorem, $|N_{k-1}|$ is base point free and is not composed with a pencil for each $k$. We have defined the following chain of nonsingular subvarieties:
$$Z_{n-d_1+1}\subset \cdots \subset Z_{n-1}\subset Z_n=X'.$$

Let us review some established inequalities in Chen-Jiang-Li \cite[Step 0, Proof of Theorem 5.1]{CJL}. Remodify $\pi$ (if necessary) such that, for $1\leq j\leq d_1-1$, $Z_{n-j}$ dominates a minimal model $Z_{n-j,0}$ and, in particular, there is a birational morphism $\pi_{Z_{n-j,0}}:Z_{n-j}\lrw Z_{n-j,0}$.  By induction, for $1\leq j\leq d_1-1$, one has
\begin{equation}
h^0(X', {M_1}|_{Z_{n-j}})\geq p_g(X')-j \label{h0} \end{equation}
Similar to Chen-Jiang-Li \cite[Inequality (5.4)]{CJL}, we have
\begin{equation}
\pi^*(K_X)|_{Z_{n-j}}\geq \frac{1}{j+1}\pi_{Z_{n-j}}^*(K_{Z_{n-j,0}})
\end{equation}
for $1\leq j\leq d_1-1$.  In particular, we have
\begin{equation}
\label{minmodelineq}
\pi^*(K_X)|_{Z_{n-d_1+1}}\geq \frac{1}{d_1}\pi_{Z_{n-d_1+1}}^*(K_{Z_{n-d_1+1,0}}).
\end{equation}

\subsection{Proof of Theorem \ref{main}: the case $d_1=n$}\

Theorem \ref{kt} and Corollary \ref{BIRAT} naturally apply to this very special case. We have already defined the chain of subvarieties:
$$Z_1\subset Z_2\subset \cdots\subset Z_{n-1}\subset Z_n=X'.$$
By construction, we have $\beta_i\geq 1$ for each $i$ with $2\leq i\leq n$.
Since $|{M_1}|_{Z_2}|$ is base point free and is not composed with a pencil, we have
$$\xi\geq (M_1\cdot Z_1)_{X'}\geq (Z_1|_{Z_2})^2\geq  2.$$
Hence, by \eqref{Ine3},  $v_{n,n}\geq 2$.

Take any integer $m>n+1$. Since $p_g(X)>0$ , the proof of Theorem \ref{kt} simply implies 
$|mK_{X'}||_{Z_i}\lsgeq |N_i|$  for $2\leq i\leq n$. Because each linear system $|N_i|$ is not composed with a pencil, the linear system $|mK_{X'}||_{Z_i}$ naturally distinguishes generic irreducible elements of $|N_i|$. Conditions Corollary \ref{BIRAT}(1) and (2) are therefore automatically satisfied.  Since $\alpha_m>\xi\geq 2$, we have proved that $r_s(X)\leq n+2$ by Corollary \ref{BIRAT}. Hence $r_{n,n}\leq n+2$.

\begin{exmp} For any integer $n\geq 2$, the general hypersurface
$$X = V_{2(n+2)}\subseteq \bP(1^{(n+1)}, n+2)$$
(of degree $2(n+2)$) is a smooth canonical $n$-fold with $\omega_X\cong \mathcal{O}_X(1)$, $\Vol(X)=2$ and $r_s(X)=n+2$.  Clearly, $X$ has canonical dimension $d_1=n$.
\end{exmp}

So we conclude that Theorem \ref{kt} (1) is true, i.e. $v_{n,n}=2$ and $r_{n,n}=n+2$.  Conversely, if $X$ has canonical dimension $n$ and $\Vol(X)=v_{n,n} = 2$, $X$ is a double cover of $\bP^n$ \cite[Proposition 2.5]{Kobayashi}.

\subsection{Further setting for the case $d_1<n$}\label{d1n}\

We take $Z_{n-d_1}$ to be the generic irreducible element of $|N_{n-d_1+1}|:=|{M_1}|_{Z_{n-d_1+1}}|$. Since $|N_{n-d_1+1}|$ is composed with a pencil and by \eqref{h0}, we have
$$N_{n-d_1+1}\equiv \beta_{n-d_1+1}Z_{n-d_1}$$
where $\beta_{n-d_1+1}\geq h^0({M_1}|_{Z_{n-d_1+1}})-1\geq p_g(X)-d_1$.

In addition, by our definitions, we have $\beta_i\geq 1$ for each $i$ with $n-d_1+2\leq i\leq n$.

\section{Proof of Theorem \ref{main}: the case $d_1=n-1$}

We have defined the chain $\{Z_i|i=1,\ldots,n\}$ according to Subsections \ref{set} and \ref{d1n}.  Note that $|N_2|$ is composed with a pencil of curves on the smooth surface $Z_2$. By our definition, $C:= Z_1$ is the generic irreducible element of $|N_2|$. Furthermore, we have $\beta_i\geq 1$ for $3\leq i\leq n$ and $\beta_2\geq p_g(X)-n+1\geq 1$. Since $Z_1$ moves in an algebraic family and $\pi^*(K_X)|_{Z_2}$ is nef and big, we have $\xi>0$ by the Hodge index theorem.

\subsection{Volume estimation}\label{sp}\

Noting that $g(Z_1)\geq 2$, by \eqref{Ine2}, we have $\xi\geq \frac{2}{n}$. We optimize the estimation by distinguishing three exclusive cases:
\smallskip

{\bf Case \ref{sp}.1}. $n=3k+2$, $k\geq 0$.

Take $m_1=\rounddown{\frac{9}{2}k+4}$.  Then, since $\alpha_{m_1}>1$,
 we have
$$\xi\geq \frac{8}{9k+8}=\frac{2\cdot4}{(2\cdot 4+1)k+2\cdot 4}$$
by \eqref{Ine1}. We will repeatedly use \eqref{Ine1} to optimize the estimation of $\xi$.
In fact,  suppose that we already know
$$\xi\geq\frac{2\cdot 4^l}{(2\cdot 4^l+1)k+2\cdot 4^l}$$
for an integer $l>0$.   Take
$$m_l=\rounddown{\frac{(2\cdot 4^l+1)k}{2\cdot 4^l}}+3k+4.$$
Then we see $\alpha_{m_l}>1$ and \eqref{Ine1} implies
$$\xi\geq \frac{2\cdot 4^{l+1}}{(2\cdot 4^{l+1}+1)k+2\cdot 4^{l+1}}.$$
Taking the limit, as $l\mapsto +\infty$, we have
\begin{equation}
\xi\geq \frac{1}{k+1}=\frac{3}{n+1}. \label{x1}
\end{equation}
\smallskip

{\bf Case \ref{sp}.2}. $n=3k+1$, $k\geq 1$.

Take $m_1=\rounddown{\frac{9k+1}{2}+2}$. Then, since $\alpha_{m_1}>1$,  we have
$$\xi\geq \frac{8}{9k+5}=\frac{2\cdot 4}{(2\cdot 4+1)k+4+1}$$
by \eqref{Ine1}.  Assume that we have shown
$$\xi\geq \frac{2\cdot 4^l}{(2\cdot 4^l+1)k+(4^l+4^{l-1}+\cdots+1)}$$
for some integer $l>0$.
Take $m_l=\rounddown{\frac{(2\cdot 4^l+1)k+(4^l+4^{l-1}+\cdots+1)}{2\cdot 4^l}}+3k+2$. Then, since $\alpha_{m_l}>1$,
 we get
$$\xi\geq \frac{2\cdot 4^{l+1}}{(2\cdot 4^{l+1}+1)k+(4^{l+1}+4^l+\cdots+1)}.$$
Taking the limit, as $l\mapsto +\infty$, we have $\xi\geq \frac{1}{k+2/3}$.
Finally, taking $m=4k+2$, then we get
\begin{equation}
\xi\geq \frac{2}{2k+1}=\frac{6}{2n+1}.\label{x2}
\end{equation}
\smallskip

{\bf Case \ref{sp}.3}. $n=3k$, $k\geq 1$.

Take $m_1=\rounddown{\frac{9k}{2}+1}$. Then, since $\alpha_{m_1}>1$, we get
$$\xi\geq \frac{8}{9k+2}>\frac{2\cdot 4}{(2\cdot 4+1)k+4}$$
by \eqref{Ine1}.
Suppose that, for some $l>0$, we have already shown
$$\xi\geq \frac{2\cdot 4^l}{(2\cdot 4^l+1)k+(4^l+\cdots+4)}.$$
Take
$$m_l=\rounddown{\frac{(2\cdot 4^l+1)k+(4^l+\cdots+4)}{2\cdot 4^l}}+3k+2.$$
Then, since $\alpha_{m_l}>1$, we have
$$\xi\geq \frac{2\cdot 4^{l+1}}{(2\cdot 4^{l+1}+1)k+(4^{l+1}+\cdots+4)}$$
by \eqref{Ine1}. Taking the limit, as $l\mapsto +\infty$, we have $\xi\geq \frac{1}{k+2/3}$.
Take $m=4k+1$. Then $\xi\geq \frac{4}{4k+1}=\frac{1}{k+1/4}$.

We continue the optimization for the lower bound of $\xi$. Assume that, for some integer $t\geq 4$, we already know $\xi\geq \frac{1}{k+1/t}$. Take $m_t=(t+1)k+1$. Then, since $\alpha_{m_t}>t-2\geq 2$, we have
$$\xi\geq \frac{t+1}{(t+1)k+1}=\frac{1}{k+1/(t+1)}$$
by \eqref{Ine1}.  Taking the limit, as $t\mapsto +\infty$, then we have
\begin{equation}\xi\geq \frac{1}{k}=\frac{3}{n}.\label{x3}
\end{equation}

By \eqref{x1}, \eqref{x2} and \eqref{x3}, we have proved the following:
\begin{cor}\label{41} For any integer $n\geq 2$, $v_{n,n-1}\geq \frac{6}{2n+(n \bmod 3)}$.
\end{cor}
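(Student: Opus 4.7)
The plan is to deduce the volume bound directly from the three lower bounds on $\xi$ established in Subsection~\ref{sp}, using Theorem~\ref{kt}(2) as the bridge. That theorem gives $K_X^n \geq \beta_2\beta_3\cdots\beta_n\cdot \xi$, so once I show every $\beta_i \geq 1$ the inequality collapses to $K_X^n \geq \xi$ and the corollary reduces to a lower bound on $\xi$.

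To verify $\beta_i \geq 1$ in the chain $Z_1 \subset \cdots \subset Z_n = X'$ constructed in Subsections~\ref{set} and~\ref{d1n}: for $3 \leq i \leq n$ the system $|N_i|$ is base point free and not composed with a pencil, so by Bertini its generic irreducible element appears with multiplicity one, giving $\beta_i \geq 1$; for $i=2$ the pencil structure yields $\beta_2 \geq p_g(X) - n + 1$, and $p_g(X) \geq n$ holds because the canonical image has dimension $n-1$ and sits inside $\bP^{p_g(X)-1}$. Hence $K_X^n \geq \xi$, as required.

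Next I would unify the bounds \eqref{x1}, \eqref{x2}, \eqref{x3} into the single formula of the statement. Writing $n = 3k, 3k+1, 3k+2$, the three bounds $\xi \geq \tfrac{3}{n},\ \tfrac{6}{2n+1},\ \tfrac{3}{n+1}$ become $\tfrac{6}{2n},\ \tfrac{6}{2n+1},\ \tfrac{6}{2n+2}$, all of the common shape $\frac{6}{2n + (n \bmod 3)}$. Combined with $\Vol(X) = K_X^n \geq \xi$ and taking the infimum over all minimal $n$-folds of canonical dimension $n-1$, this delivers $v_{n,n-1} \geq \frac{6}{2n + (n \bmod 3)}$.

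The real obstacle lies upstream in the three case analyses rather than in the corollary itself: in each residue class the iterative bootstrap producing the sharp bound on $\xi$ requires a carefully chosen sequence $m_l$, tuned so that $\alpha_{m_l} > 1$ holds using only the currently available estimate on $\xi$ and so that the resulting recursion converges to the desired limit (the $n = 3k$ case even needs an additional round of a different shape, pushing $\xi \geq 1/(k + 1/4)$ up to $\xi \geq 1/k$). Once that delicate iteration is carried out, the assembly of the corollary itself is routine.
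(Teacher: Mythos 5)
Your proposal is correct and follows essentially the same route as the paper: the corollary is deduced by combining the three case-by-case lower bounds \eqref{x1}, \eqref{x2}, \eqref{x3} on $\xi$ with the inequality $K_X^n\geq \beta_2\cdots\beta_n\,\xi\geq \xi$ from Theorem \ref{kt}(2), where $\beta_i\geq 1$ for $3\leq i\leq n$ and $\beta_2\geq p_g(X)-n+1\geq 1$ exactly as in the setup at the start of Section 4. Your unification of the three bounds into $\frac{6}{2n+(n\bmod 3)}$ and your observation that the substantive work lies in the iterative estimates of Subsection \ref{sp} both match the paper.
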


\subsection{The canonical stability index}\label{sq}

As we have seen, for $3\leq i\leq n$, $|N_i|$ is not composed with a pencil. Applying the Kawamata-Viehweg vanishing theorem in the similar way to that of \eqref{e1} and \eqref{e2}, for any $m>n-2$ and any $j$ with $3\leq j\leq n-1$, we have
\begin{eqnarray}
|mK_{X'}||_{Z_j}&\lsgeq & |K_{X'}+\roundup{(m-n+2)\pi^*(K_X)}+(n-3)M_1||_{Z_{j}}\notag\\
&\lsgeq& |{M_1}|_{Z_{j}}|. \label{ee1}
\end{eqnarray}
Hence $|mK_{X'}||_{Z_j}$ distinguishes different generic irreducible elements of $|N_j|$ whenever $m>n-2$ and $3\leq j\leq n$.

We study the situation on $|N_2|$, which is composed with a pencil of curves. If $|N_2|$ is composed with a rational pencil of curves, as long as $m>n-1$, $|mK_{X'}||_{Z_2}$ distinguishes different generic irreducible elements of $|N_2|$ since
$|mK_{X'}||_{Z_2}\lsgeq |N_2|$.  If $|N_2|$ is composed with a non-rational pencil of curves, picking two different generic irreducible elements $C_1$, $C_2$ of $|N_2|$, then  $(M_1|_{Z_2}-C_1-C_2)$ is nef and
$$(M_1|_{Z_2}-C_i)|_{C_{i}}\sim 0$$
for $i=1,\ 2$.  Hence, by the vanishing theorem and for any $m>n$, one has the relation:
\begin{eqnarray*}
|mK_{X'}||_{Z_2}&\lsgeq&|K_{X'}+\roundup{(m-n)\pi^*(K_X)}+(n-1)M_1|\\
&\lsgeq&|K_{Z_2}+\roundup{(m-n)\pi^*(K_X)|_{Z_2}}+{M_1}|_{Z_2}|
\end{eqnarray*}
and the surjective map:
\begin{eqnarray*}
&&H^0(Z_2, K_{Z_2}+\roundup{(m-n)\pi^*(K_X)|_{Z_2}}+{M_1}|_{Z_2})\\
&\lrw& H^0(C_1, K_{C_1}+D_1)\oplus H^0(C_2, K_{C_2}+D_2)
\end{eqnarray*}
where
$$D_i:= (\roundup{(m-n)\pi^*(K_X)|_{Z_2}}+{M_1}|_{Z_2}-C_i)|_{C_i}=\roundup{(m-n)\pi^*(K_X)|_{Z_2}}|_{C_i}$$ for $i=1,\ 2$. Note that $H^0(C_i, K_{C_i}+D_i)\neq 0$ since $\deg(D_i)>0$. Therefore $|mK_{X'}|$ can distinguish different generic irreducible elements of $|N_2|$ in this case.
In summary, for any $m>n$, Conditions (1) and (2) of Corollary \ref{BIRAT} are satisfied.

We have proved that $\xi\geq \frac{6}{2n+(n \bmod 3)}$. For any $m\geq \frac{5n+3 + (n \bmod 3)}{3}$, we have $\alpha_m>2$. By Corollary \ref{BIRAT}, whenever
$m\geq \frac{5n+3 + (n \bmod 3)}{3}$, $\varphi_{m,X}$ is birational onto its image.  We have proved the following:

\begin{cor}\label{42} For any integer $n\geq 2$, $r_{n,n-1}\leq \frac{5n+3+ (n \bmod 3)}{3}$.
\end{cor}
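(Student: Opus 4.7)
The plan is to apply Corollary \ref{BIRAT} to the chain $Z_1\subset Z_2\subset \cdots\subset Z_{n-1}\subset Z_n=X'$ already constructed in Subsections \ref{set} and \ref{d1n}, with the $|N_i|$ already defined there. Condition (1) of Corollary \ref{BIRAT} is built into the setup by Bertini. So I must verify Condition (2), that $|mK_{X'}||_{Z_i}$ distinguishes different generic irreducible elements of $|N_i|$ for each $i$, and then Condition (3), that $\alpha_m > 2$, and match both requirements to the claimed threshold $m\geq \frac{5n+3+(n \bmod 3)}{3}$.

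For the ``easy'' range $3\le i\le n$, the system $|N_i|$ is not composed with a pencil. A routine application of Kawamata--Viehweg vanishing, entirely parallel to the derivation of \eqref{e1} and \eqref{e2} in the proof of Theorem \ref{kt}, should give $|mK_{X'}||_{Z_j}\lsgeq |{M_1}|_{Z_j}| = |N_j|$ for every $m>n-2$. Since $|N_j|$ already distinguishes its own generic irreducible elements (it is not composed with a pencil), Condition (2) of Corollary \ref{BIRAT} follows for these $i$.

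The genuinely delicate point is $i=2$, where $|N_2|$ \emph{is} composed with a pencil of curves on the smooth surface $Z_2$. If the pencil is rational, a direct vanishing argument handles $m>n-1$: we get $|mK_{X'}||_{Z_2}\lsgeq |N_2|$ and the base $\bP^1$ separates points. The hard subcase is a non-rational pencil. Here the strategy is to pick two different generic irreducible elements $C_1,C_2$ of $|N_2|$, observe that $M_1|_{Z_2}-C_1-C_2$ is nef while $(M_1|_{Z_2}-C_i)|_{C_i}\sim 0$, then apply Kawamata--Viehweg to the divisor $(m-n)\pi^*(K_X)|_{Z_2}+M_1|_{Z_2}$ and restrict to $C_1\sqcup C_2$. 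The nefness and bigness of $(m-n)\pi^*(K_X)|_{Z_2}$ for $m>n$ will ensure a surjective restriction map onto $H^0(C_1,K_{C_1}+D_1)\oplus H^0(C_2,K_{C_2}+D_2)$, and since $\deg D_i>0$ each summand is nonzero, giving the desired distinguishing. I expect this non-rational subcase to be the main obstacle, because one cannot simply quote $|mK_{X'}||_{Z_2}\lsgeq |N_2|$; one really has to separate two specific fibers by a vanishing/restriction argument.

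Once Condition (2) is in place for all $i$ and all $m>n$, Condition (3) is a numerical check. Using $\beta_i\ge 1$ for every $i$ (from Subsections \ref{set} and \ref{d1n}), one has $\sum_{i=2}^n 1/\beta_i\le n-1$, so $\alpha_m\ge (m-n)\xi$. Plugging in the lower bound $\xi\ge \frac{6}{2n+(n\bmod 3)}$ established in Corollary \ref{41}, the inequality $\alpha_m>2$ holds as soon as $m>n+\frac{2n+(n\bmod 3)}{3}=\frac{5n+(n\bmod 3)}{3}$, i.e.\ $m\ge \frac{5n+3+(n\bmod 3)}{3}$ (this bound is an integer in all three residue classes of $n$ modulo $3$). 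Corollary \ref{BIRAT} then yields that $\varphi_{m,X}$ is birational for every such $m$, proving $r_{n,n-1}\le \frac{5n+3+(n\bmod 3)}{3}$.
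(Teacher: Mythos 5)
Your proposal is correct and follows essentially the same route as the paper: the same Kawamata--Viehweg argument giving $|mK_{X'}||_{Z_j}\lsgeq |N_j|$ for $j\geq 3$, the same split of the $i=2$ case into rational and non-rational pencils with the surjection onto $H^0(C_1,K_{C_1}+D_1)\oplus H^0(C_2,K_{C_2}+D_2)$, and the same numerical check that $\alpha_m>2$ once $m\geq \frac{5n+3+(n\bmod 3)}{3}$ using $\beta_i\geq 1$ and Corollary \ref{41}. The only difference is that you spell out the arithmetic reduction $\alpha_m\geq (m-n)\xi$ explicitly, which the paper leaves implicit.
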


\subsection{Examples}
\label{n-1_ex}

\begin{exmp} (cf. \cite[Example 6.6(1)]{CJL})\label{43} Let $n=3k+2$. The general hypersurface
$X=V_{10k+10}\subset \bP(1^{(3k+2)}, 2(k+1), 5(k+1))$ of degree $10(k+1)$ has at worst canonical singularities and is a minimal $n$-fold with $\omega_X = \mathcal{O}_X(1)$, canonical dimension $n-1$, and volume $\Vol(X)=\frac{1}{k+1}=\frac{3}{n+1}=\frac{6}{2n+(n \bmod 3)}$. Furthermore, it is clear that $r_s(X)=5k+5=\frac{5n+3 + (n \bmod 3)}{3}$.
\end{exmp}

\begin{exmp} (cf. \cite[Example 6.6(2)]{CJL})\label{44} Let $n=3k+1$. The general hypersurface
$X=V_{10k+6}\subset \bP(1^{(3k+1)}, 2k+1, 5k+3)$ of degree $10k+6$ has at worst canonical singularities and is a minimal $n$-fold with $\omega_X = \mathcal{O}_X(1)$, canonical dimension $n-1$, and volume
$\Vol(X)=\frac{6}{2n+1}=\frac{6}{2n+(n \bmod 3)}$. Furthermore, it is clear that $r_s(X)=5k+3=\frac{5n+3 + (n \bmod 3)}{3}$.
\end{exmp}

\begin{exmp}
\label{45}
Let $n = 3k$.  The general hypersurface
$X = V_{20k+2} \subset \bP(1,2^{(3k-1)},4k,10k+1)$ of degree $20k+2$ has at worst canonical singularities and is a minimal $n$-fold with $\omega_X = \mathcal{O}_X(2)$, canonical dimension $n-1$, and volume $\Vol(X) = \frac{3}{n} = \frac{6n}{2n + (n \bmod 3)}.$  Furthermore, it is clear that $r_s(X) = 5k+1 = \frac{5n+3 + (n \bmod 3)}{3}$.
\end{exmp}


Theorem \ref{main}(2) follows directly from Corollary \ref{41}, Corollary \ref{42}, Example \ref{43}, Example \ref{44} and Example \ref{45}.

\section{Geometry of Optimal Examples}\label{GeoP}

In this section, we summarize an alternate approach proving the volume bound in Theorem \ref{main}(2). This method also shows that any $X$ realizing the minimal value of $v_{n,n-1}$ must strongly resemble one of the weighted projective hypersurfaces in Section \ref{n-1_ex}. We omit some of the proof details and continue to use the notation of Section \ref{set}.

\begin{claim}
Let $X$ be a minimal variety of general type with dimension $n \geq 3$ and canonical dimension $n-1$ such that 
\begin{equation}
\label{Volmin}
\Vol(X) = \frac{6}{2n + (n \bmod 3)}.
\end{equation}
Then the canonical map of $X$ is a dominant rational map to $\bP^{n-1}$ with general fiber a genus $2$ curve.  Furthermore, the general surface $Z_2$ has minimal volume in the sense of Proposition \ref{extremesurf} below.  Its minimal model $Z_{2,0}$ contains a tree of rational curves contracted by the morphism from $X'$ to its canonical model, described in Cases \ref{GeoP}.1, \ref{GeoP}.2, and \ref{GeoP}.3.
\end{claim}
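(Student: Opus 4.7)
The plan is to trace equalities through the derivation of the volume lower bound in Section \ref{sp}. The assumption \eqref{Volmin} forces every intermediate inequality to be sharp, which pins down strong numerical and geometric constraints on the chain $Z_1\subset\cdots\subset Z_n=X'$ associated to the canonical fibration.

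First, I would apply \eqref{Ine3}:
\[
\frac{6}{2n+(n \bmod 3)} \;=\; \Vol(X) \;\geq\; \beta_2\beta_3\cdots\beta_n\,\xi.
\]
Combining $\beta_i\geq 1$ for $i\geq 3$, $\beta_2\geq p_g(X)-n+1\geq 1$ from Subsection \ref{d1n}, and the iterative bound $\xi\geq\frac{6}{2n+(n \bmod 3)}$ established in \eqref{x1}, \eqref{x2}, and \eqref{x3}, equality throughout forces $\beta_i=1$ for all $i\geq 2$, $\xi=\frac{6}{2n+(n \bmod 3)}$, and $p_g(X)=n$. Therefore the canonical image sits in $\bP^{n-1}$, and since $d_1=n-1$ it must coincide with $\bP^{n-1}$. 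Next, \eqref{Ine2} with $\sum_{i=2}^n 1/\beta_i=n-1$ gives
\[
g(Z_1)-1 \;\leq\; \frac{n\xi}{2} \;=\; \frac{3n}{2n+(n \bmod 3)} \;\leq\; \frac{3}{2},
\]
which combined with $g(Z_1)\geq 2$ (curve of general type) forces $g(Z_1)=2$, as required.

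For the surface analysis, equality in the chain leading to \eqref{Ine3} yields $(\pi^*(K_X)|_{Z_2})^2 = \beta_2\xi = \xi = \frac{6}{2n+(n \bmod 3)}$, while \eqref{minmodelineq} produces
\[
K_{Z_{2,0}}^2 \;\leq\; (n-1)^2\,\xi \;=\; \frac{6(n-1)^2}{2n+(n \bmod 3)}.
\]
The minimal model $Z_{2,0}$ inherits a relatively minimal genus-$2$ fibration over $\bP^1$ (obtained by restricting the canonical fibration to a line in $\bP^{n-1}$). Comparing the displayed upper bound against the lower bound supplied by Proposition \ref{extremesurf} forces $Z_{2,0}$ to be extremal in that sense. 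The effective $\bQ$-divisor $E:=\pi^*(K_X)|_{Z_2}-\frac{1}{n-1}\pi_{Z_{2,0}}^*(K_{Z_{2,0}})$ is contracted by the canonical model morphism of $X$, and pushing $E$ forward to $Z_{2,0}$ produces the promised tree of smooth rational curves.

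The main obstacle is identifying the precise combinatorics of this tree in each residue class. One must classify the configurations of smooth rational curves on $Z_{2,0}$ compatible with both the numerical identity $(\pi^*(K_X)|_{Z_2})^2=\frac{6}{2n+(n \bmod 3)}$ and the genus-$2$ pencil, and then match each admissible configuration to the extremal hypersurface of Examples \ref{43}, \ref{44}, \ref{45}. The expected outcome is three distinct tree types, one per residue, to be described in Cases \ref{GeoP}.1, \ref{GeoP}.2, and \ref{GeoP}.3. This will likely require a delicate Horikawa-style case analysis of genus-$2$ fibrations combined with the canonical contraction criteria, since the numerical data alone permits several a priori graph shapes and the job is to eliminate all but one per residue.
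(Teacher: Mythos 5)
Your first paragraph is sound and matches the paper's own argument: equality in \eqref{Ine3} together with the bounds of Subsection \ref{sp} forces $\beta_2=\cdots=\beta_n=1$, $\xi=\frac{6}{2n+(n \bmod 3)}$ and $p_g(X)=n$, whence the canonical image is all of $\bP^{n-1}$, and \eqref{Ine2} then gives $g(Z_1)=2$. The gap is in the surface step. Your plan is to squeeze $K_{Z_{2,0}}^2$ between the Noether-type lower bound of Proposition \ref{extremesurf} and the upper bound $K_{Z_{2,0}}^2\le (n-1)^2\xi=\frac{6(n-1)^2}{2n+(n \bmod 3)}$ coming from \eqref{minmodelineq}. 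These two bounds do not meet: for $n=3k+2$ the interval is $[8k,\;9k-3+\frac{4}{k+1}]$, and the analogous computations in the other residue classes show that the gap between the two bounds grows like $n/3$. So for all but small $n$ the squeeze leaves several integer values of $K_{Z_{2,0}}^2$ available and does not force minimality of $\Vol(Z_2)$; a fortiori it gives no access to the equality case of Proposition \ref{extremesurf}, which is the only source of the tree of rational curves in the statement.

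The paper's route is genuinely different at exactly this point: it does not compare $(n-1)^2\Vol(X)$ with $K_{Z_{2,0}}^2$ but with $K_S^2$, where $f:Z_{2,0}\to S$ is a further birational contraction of a negative-definite connected curve of arithmetic genus $0$ with nonpositive discrepancies, so that $K_S$ is still nef and $K_S^2=K_{Z_{2,0}}^2-(\sum a_iE_i)^2\ge K_{Z_{2,0}}^2$. The correction term $-(\sum a_iE_i)^2$ is precisely what closes the gap above (for instance $8k+\frac{(k-1)^2}{k+1}=\frac{(3k+1)^2}{k+1}=(n-1)^2\xi$ when $n=3k+2$), and the curves to be contracted are read off from the structural part of Proposition \ref{extremesurf} ($K_{Z_{2,0}}=(n-1)Z_1+V+2E$ with $E$ a section, $V$ a union of vertical $(-2)$-curves, and $V\cdot E\in\{0,1,2\}$ according to the residue of $n$), not deduced from the numerical data alone. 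Your closing paragraph concedes that identifying the tree is ``the main obstacle'' but supplies no argument for it, and the divisor you call $E$ is only a $\bQ$-divisor class difference, with no reason given that its support is a tree of rational curves. As written, the proposal establishes the first sentence of the Claim but neither the minimality of $\Vol(Z_2)$ nor the description of the contracted configuration.
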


\begin{proof}

Let $X$ be a minimal variety realizing \eqref{Volmin} as above.  We immediately see that $g(Z_1) = 2$, or else $\xi \geq \frac{4}{n}$ by \eqref{Ine2}.  Further, $p_g(X) = n$, or else $\beta_2 \geq 2$, so that $K_X^n \geq 2 \cdot \frac{2}{n} = \frac{4}{n}$ by \eqref{Ine3}.  The image $\overline{\varphi_1(X)}$ of the canonical map is then a subvariety of $\bP^{n-1}$ of dimension $n-1$, so $\overline{\varphi_1(X)} = \bP^{n-1}$.  Therefore, the canonical map is a dominant rational map to $\bP^{n-1}$ whose general fiber is a smooth genus 2 curve. 

We can also say a great deal about $Z_2$ and the properties of the birational contraction $\pi:X' \rightarrow X_{\can}$, restricted to $Z_2$.  Here $X_{\can} = \mathrm{Proj} \bigoplus_{m=1}^{\infty} H^0(X,mK_X)$ is the canonical model of the variety $X$ (equivalently, of $X'$) and $K_{X_{\can}}^n = K_X^n$. 

The key idea is to find a commutative diagram
\begin{center}
\label{factor}
\begin{tikzcd}
  X' \arrow[rr, "\pi_{\can}"] & & X_{\can} \\
Z_2 \arrow[u, hook] \arrow[r, "g"] & Z_{2,0} \arrow[r, "f"]  & S \arrow[u, "\psi"],
\end{tikzcd}
\end{center}
where $g: Z_2 \rightarrow Z_{2,0}$ is the minimal model, $S$ is a projective normal surface with $K_S$ $\bQ$-Cartier, and $f: Z_{2,0} \rightarrow S$ is a birational morphism whose exceptional divisors have nonpositive discrepancies. 

By inequality \eqref{minmodelineq}, $(n-1)\pi|_{Z_2}^* (K_X) \geq g^* (K_{Z_{2,0}})$.  Taking the pushforward to $S$ gives $(n-1) \psi^* (K_{X_{\can}}) \geq K_S$ by the commutativity of the diagram and because $K_{Z_{2,0}}$ and $f^* (K_S)$ differ by exceptional divisors contracted by $f$.  Both $\psi^*(K_{X_{\can}})$ and $K_S$ are nef divisors, the latter by the assumption on discrepancies. 
Therefore,

\begin{equation}
\label{mainineq}
((n-1)\psi^*(K_{X_{\can}}))^2 \geq (n-1)\psi^*(K_{X_{\can}})  \cdot K_S \geq K_S^2.
\end{equation}

Combining the inequality (\ref{mainineq}) with $K_X^n \geq (\pi_{\can}^*(K_{X_{\can}})|_{Z_2})^2$, we get

$$(n-1)^2 K_X^n \geq ((n-1)\pi_{\can}^*(K_{X_{\can}})|_{Z_2})^2 = ((n-1)\psi^*(K_{X_{\can}}))^2 \geq K_S^2.$$

Moreover, the intersection number $K_S^2$ is determined by the volume of the minimal surface $Z_{2,0}$ and the divisors are contracted by $f$.  We may obtain sharper bounds on $K_X^n$ via this method than from only considering smooth models of $Z_2$ because often, $K_S^2 > \Vol(Z_2)$.
Therefore, we seek to factor $\pi_{\can}|_{Z_2}: Z_2 \rightarrow X_{\can}$ through contractions to certain singular surfaces $S$.  

The minimal model $Z_{2,0}$ of $Z_2$ is a fibration of genus 2 curves over $\bP^1$. The general fiber is a genus 2 curve $Z_1$.
We know by the adjunction formula that $Z_2$ satisfies $K_{Z_2} \geq (n-1)Z_1$.  Supposing $n \geq 3$, $p_g(Z_2) \geq h^0(Z_2,(n-1)Z_1) \geq n \geq 3$.  Under these assumptions, we have the following properties (cf. \cite[Proposition 2.9]{CCJ}):

\begin{prop}
\label{extremesurf}
\begin{equation}
\label{minvolS0}
    \Vol(Z_2)=(K_{Z_{2,0}})^2 \geq 
\begin{cases}
\frac{8n-16}{3}, & n \equiv 2 \bmod 3 \\
\frac{8n-14}{3}, & n \equiv 1 \bmod 3 \\
\frac{8n-12}{3}, & n \equiv 0 \bmod 3 \text{ .}
\end{cases}
\end{equation}
The fibration $Z_2 \rightarrow \bP^1$ factors through $Z_{2,0}$ and $K_{Z_{2,0}} = (n-1)Z_1 + G$, where $G$ is an effective divisor.

If we have equality in \eqref{minvolS0}, then $G = V + 2E$, where $V$ is a vertical divisor, $E \cong \bP^1$ is a section of the fibration, and $E^2 = -\frac{1}{3}(n+1 + V \cdot E)$.  The divisor $V$ satisfies $K_{Z_{2,0}} \cdot V = 0$ so that all irreducible components of $V$ are $(-2)$-curves contained in fibers.  Finally, $V \cdot E = 0, 1, 2$, in the cases $n = 3k+2$, $n = 3k+1$, and $n = 3k$, respectively.
\end{prop}

One can show that if $K_X^n$ is small enough to compete with the examples in Section \ref{n-1_ex}, then $\Vol(Z_2)$ also must be equal to the minimum in \eqref{minvolS0}.  Further, $\pi_{\can}|_{Z_2}: Z_2 \rightarrow X_{\can}$ factors through the minimal model $g: Z_2 \rightarrow Z_{2,0}$.  Using Proposition \ref{extremesurf}, we may identify some additional curves on $Z_{2,0}$ which are contracted by $\pi_{\can}$.  In each case, we will contract a connected curve of arithmetic genus $0$ with negative-definite self-intersection matrix.  This results in a projective surface with rational singularities (see \cite[Theorem 2.3]{Artin} and \cite[Proposition 1]{Artin2}), which is therefore $\bQ$-Gorenstein.

\smallskip

{\bf Case \ref{GeoP}.1.} $n=3k+2$, $k\geq 1$.

This is the easiest case, because we can define the appropriate $f: Z_{2,0} \rightarrow S$ by only contracting the curve $E$ of Proposition \ref{extremesurf} on $Z_{2,0}$. This gives $K_{Z_{2,0}}^2 = f^*K_S + aE$ with $a = \frac{-k+1}{k+1} \leq 0$.  Note that the $n = 2$ case must be treated separately.  Assuming that $K_{Z_{2,0}}^2$ is its minimum of $\frac{8n-16}{3}$, we may compute  

$$(n-1)^2K_X^n \geq K_S^2 = \frac{8n-16}{3} - (aE)^2 = \frac{3(n-1)^2}{n+1}.$$

Therefore, $K_X^n \geq \frac{3}{n+1}$ and we recover the same volume bound.

\medskip

We can see this curve $E$ explicitly in Example \ref{43}: the general weighted projective hypersurface
$$X_{10(k+1)} \subset \bP^{n+1}(1^{(n)},2(k+1),5(k+1))$$
has canonical dimension $n-1$, with the canonical map a rational map $X \dashrightarrow \bP^{n-1}$ having generic fiber a curve of genus 2.

Now, consider the intersection of $n-2$ general sections of the reflexive sheaf $K_X$.  This is a general surface $S = S_{10(k+1)} \subset \bP^3(1,1,2(k+1),5(k+1))$ with one singular point $p$, which is a quotient singularity of type $\frac{1}{k+1}(1,1)$.  This is a cone over the rational normal curve of degree $k+1$, so we can explicitly resolve it with a single blowup at $p$.  Denote this blowup as $f: Z_{2,0} \rightarrow S$. This has the same numerical properties as in the general case above.  From the description as a weighted projective hypersurface, we know $K_S^2 = \frac{(3k+1)^2}{k+1}$.  Therefore,
$$K_{Z_{2,0}}^2 = \frac{8n-16}{3},$$
so that the smooth model has volume matching the minimum in Proposition \ref{extremesurf}.  As the notation suggests, this is the same $Z_{2,0}$ obtained by desingularizing $X$ with a blowup and applying the machinery of Section \ref{set}.
\smallskip

{\bf Case \ref{GeoP}.2.} $n=3k+1$, $k\geq 1$.
If $K_{Z_{2,0}}^2$ achieves the minimum of $\frac{8n-14}{3}$, then by Proposition \ref{extremesurf}, $V \cdot E = 1$.  In general, $V$ could have several $(-2)$-curves as irreducible components, but a unique one, say $V_1$, intersects $E$ in a single point.  There is a birational morphism $f:Z_{2,0} \rightarrow S$ contracting exactly the connected curve $V_1 \cup E$.

The morphism $\pi_{\can}:Z_2 \rightarrow X_{\can}$ contracts both these curves on the minimal model, so it factors through $f: Z_{2,0} \rightarrow S$ and $K_{Z_{2,0}} = f^*K_S + aE + bV_1$, where $a = \frac{1-k}{2k+1}$ and $b = \frac{2-2k}{2k+1}$ are both nonpositive.  One can calculate
$$(n-1)^2 K_X^n \geq K_S^2 = \frac{8n-14}{3} - (aE + bV_1)^2 = \frac{6(n-1)^2}{2n+1},$$

so that $K_X^n \geq \frac{6}{2n+1}$, in agreement with Corollary \ref{41}.

\medskip

Once again, these exceptional curves appear in Example \ref{44}.  The general weighted projective hypersurface
$$X_{10k+6} \subset \bP^{n+1}(1^{(n)},2k+1,5k+3)$$
has canonical dimension $n-1$ with the canonical map a rational map $X \dashrightarrow \bP^{n-1}$ having generic fiber a curve of genus 2.  As above, we consider the intersection of $n-2$ general sections of $K_X$ to obtain $S = S_{10k+6} \subset \bP^3(1,1,2k+1,5k+3)$.  Then $K_S^2 = \frac{6(n-1)^2}{2n+1}$ and $S$ has only one singular point, at $[0:1:0:0]$, which is a quotient singularity of type $\frac{1}{2k+1}(1,5k+3) = \frac{1}{2k+1}(1,k+1)$.  The resolution of any surface quotient singularity of the form $\frac{1}{P}(1,Q)$ is related to the {\it Hirzebruch-Jung continued fraction} of $\frac{P}{Q}$ (see, for example, \cite[Section 2]{HTU}).  In this case,
$$\frac{2k+1}{k+1} = 2 - \frac{1}{k+1},$$
so $\frac{P}{Q} = [2,k+1]$ is the continued fraction representation.  This means that the exceptional locus has two rational curves as irreducible components, with self-intersections $-2$ and $-(k+1) = \frac{1}{3}(n+2)$, corresponding to $V_1$ and $E$ above, respectively.  It's straightforward to verify (e.g. using \cite[Section 2]{HTU}) that the discrepancies also agree with the above, so that 
$$K_{Z_{2,0}}^2 = \frac{8n-14}{3},$$
as expected. For $n = 4$, the singularity $\frac{1}{3}(1,2)$ is canonical (in particular, an $A_2$ singularity) and the resolution is crepant.  For $n \geq 7$, $S$ has worse than canonical singularities so $K_{Z_{2,0}}^2 < K_S^2$.

\smallskip

{\bf Case \ref{GeoP}.3.} $n=3k$, $k\geq 1$.
If $K_{Z_{2,0}}^2$ achieves the minimum of $\frac{8n-12}{3}$, then by Proposition \ref{extremesurf}, $V \cdot E = 2$.  Unlike in the previous two cases, these intersection numbers do not pin down a unique component graph for the connected component of $V_{\mathrm{red}} \cup E$ containing $E$.  The simplest configuration is that there are two irreducible components $V_1$ and $V_2$ of $V$ intersecting $E$, each in a single point, and no other components of $V$ intersecting these.  Fortunately, one can verify that the other possible configurations yield the same volume bound; suppose for clarity that we have the simple configuration.  Once again, there is a birational morphism $f: Z_{2,0} \rightarrow S$ contracting exactly $E \cup V_1 \cup V_2$ and $\pi_{\can}$ factors through this morphism.  $K_{Z_{2,0}} = f^*K_X + aE + bV_1 + cV_2$ with $a = c = \frac{1-k}{2k}$ and $b = \frac{1-k}{k}$ (both nonpositive).  We have

$$(n-1)^2 K_X^n \geq K_S^2 = \frac{8n-12}{3} - (aE + bV_1 + cV_2)^2 = \frac{3(n-1)^2}{n}.$$

This gives $K_X^n \geq \frac{3}{n}$, as desired.

\medskip

Example \ref{45} again displays the same behavior.  For any positive integer $n = 3k$, $k \geq 1$, the general weighted projective hypersurface
$$X_{20k+2} \subset \bP^{n+1}(1,2^{(n-1)},4k,10k+1)$$
has canonical dimension $n-1$.  It has $K_X = \mathcal{O}_X(2)$ and volume $\frac{1}{k} = \frac{3}{n}$.  The rational map $|K_X| = |\mathcal{O}_X(2)|$ has image $\bP^{n-1}(1,2^{(n-1)}) \cong \bP^{n-1}$.  The intersection of $n-2$ general sections of $K_X$ can be identified with a general $S_{20k+2} \subset \bP^3(1,2,4k,10k+1)$.  To see this, let $x_1$ have degree $1$ and $x_2, \ldots, x_n$ have degree $2$, so that a section of $\mathcal{O}_X(2)$ is a polynomial of the form $a_1 x_1^2 + a_2 x_2 + \cdots + a_n x_n$.  After a suitable change of coordinates within $x_2, \ldots x_n$, we may assume that the common zero set of $n-2$ general sections is given by equations $x_1^2 = x_3, x_1^2 = x_4,\ldots,x_1^2 = x_n$, which yields the result.

This surface $S$ satisfies $K_S^2 = \frac{3(n-1)^2}{n}$ and is smooth away from the weighted $\bP^1$ with weights $2$ and $4k$.  The general $S$ only intersects this stratum at the coordinate point of $4k$.  At this base point, there is a singularity of type $\frac{1}{4k}(1,2k+1)$.  The corresponding Hirzebruch-Jung continued fraction is
$$\frac{4k}{2k+1} = 2 - \frac{1}{(k+1) - \frac{1}{2}},$$
also represented as $\frac{4k}{2k+1} = [2,k+1,2]$.  Therefore, the minimal resolution of this singularity is a chain of three exceptional rational curves with self-intersections $-2$, $-(k+1) = \frac{1}{3}(n+3)$, and $-2$.  These play the role of $V_1, E$, and $V_2$ above, respectively.    Let $f: Z_{2,0} \rightarrow S$ be the resolution of singularities. Then 
$$K_{Z_{2,0}}^2 = \frac{8n-12}{3}.$$

This $Z_{2,0}$ is a minimal surface and admits a genus 2 fibration to $\bP^1$ as above.  When $n = 3$, $S$ is canonical, with a singularity of type $A_3$.  For $n \geq 6$, $S$ has worse-than-canonical singularities and $K_S^2 > K_{Z_{2,0}}^2$.  Table 1 summarizes the exceptional curves in each of the three cases.
\end{proof}

\begin{table}
\label{table}
\renewcommand{\arraystretch}{1.5}
\centering
\begin{tabular}{ccc}
\begin{tikzpicture}
\draw (0,0) -- node[below] {$E$} (3,0);
\end{tikzpicture} &
\begin{tikzpicture}
\draw (0,0) -- node[below] {$E$} (3,0);
\draw (0.5,-0.5) -- node[left] {$V_1$} (0.5,2.5);
\end{tikzpicture} &
\begin{tikzpicture}
\draw (0,0) -- node[below] {$E$} (3,0);
\draw (0.5,-0.5) -- node[left] {$V_1$} (0.5,2.5);
\draw (2.5,-0.5) -- node[right] {$V_2$} (2.5,2.5);
\end{tikzpicture}
\\

$n = 3k+2$ & $n=3k+1$ & $n = 3k$ \\
\end{tabular}
\caption{Exceptional curves of $Z_{2,0} \rightarrow S$ for optimal examples in the three cases}
\label{summary}
\end{table}

\bigskip

The results of this section notwithstanding, not all minimal varieties satisfying \eqref{Volmin} are necessarily quasi-smooth weighted projective hypersurfaces.  This is illustrated by the following example.

\begin{exmp}\label{exnonhyp} 
Let $n=3k$ for $k\geq 1$.  The general hypersurface
$$X_{10k+1}\subset \bP(1^{(3k)}, 2k, 5k)$$
is quasi-smooth and has only one non-canonical singularity of type $\frac{1}{5k}(1^{(3k-1)}, 2k)$, which satisfies the ``nefness criterion'' of Chen-Jiang-Li \cite[Theorem 1.3]{CJL}. So, after performing a weighted blowup at this point, by \cite[Proposition 2.9]{CJL} one gets a minimal variety $Y$ of dimension $n$ with canonical volume $\frac{1}{k}=\frac{3}{n}=\frac{6}{2n+(n \bmod 3)}$.  This example does not have $\mathrm{Pic}(X) \cong \bZ$, so it is not a quasi-smooth weighted projective hypersurface (see \cite[Theorem 3.2.4]{Dolgachev}).

We study the canonical stability index of this example. Denote by $x_i$ ($i=1,\ldots, 3k$) the coordinates of weight $1$, by $w$ the coordinate with weight $2k$, and by $t$ the coordinate with weight $5k$ in $\bP(1^{(3k)}, 2k, 5k)$.  The defining equation can be written:
$$f_0(x_1,\ldots,x_{3k},w)+f_1(x_1,\ldots,x_{3k},w)t+(a_1x_1+\cdots+a_{3k}x_{3k})t^2=0.$$
Denote by $\theta:Y \rightarrow  X:=X_{10k+1}$ the weighted blowup with weights $(1^{(3k-1)}, 2k)$.  Then we have $5kK_Y=\theta^*(5kK_X)-E$ with $E$ the exceptional divisor and 
$\theta(E)=Q\not\in (t=0)$.  This means that there are the following injective maps:
$$H^0(Y, 5kK_Y)\hookrightarrow \Lambda \hookrightarrow H^0(X, 5kK_X)$$
where $\Lambda$ is the vector space spanned by products of $x_1,\ldots,x_{3k},w$ of weighted degree $5k$. As $\Lambda$ does not give a birational map of $X$ (since $t$ is missing), $\varphi_{5k,Y}$ is non-birational. Hence $r_s(Y)=5k+1=\frac{5n+3}{3}=r_{n,n-1}$. 
\end{exmp}

\begin{rem}
In the explicit examples in Section \ref{n-1_ex} and Example \ref{exnonhyp}, the smooth models $Z_{2,0}$ of surfaces that appear are {\it Horikawa surfaces}, that is, they are on the Noether line $K_{Z_{2,0}}^2 = 2p_g(Z_{2,0}) - 4$.  For such varieties, $|K_{Z_{2,0}}|$ gives a ramified double cover to a ruled surface and the types of ruled surfaces and branch loci that appear are completely classified (see \cite{Horikawa}).  This suggests the following question:
\end{rem}

\begin{question}
If $X$ is a minimal variety of general type of dimension $n$, canonical dimension $n-1$, and smallest possible volume, is the minimal model of the associated general surface $Z_2$ a Horikawa surface?
\end{question}

\section*{\bf Acknowledgments}

M. Chen is a member of the Key Laboratory of Mathematics for Nonlinear Sciences, Fudan University. We would like to thank Chen Jiang and Burt Totaro for useful discussions and suggestions.

\end{document}